 \newtheorem{thm}{Theorem}[section]
 \newtheorem{lem}[thm]{Lemma}
 \newtheorem{rem}[thm]{Remark}
 \numberwithin{equation}{section}
\newcommand{\Bx}{r}
\newcommand{\al}{\alpha}
\newcommand{\mathd}{\mathrm{d}}
\begin{document}
\title{On Finite Time Singularity and Global Regularity of an Axisymmetric Model for the 3D Euler Equations}
\author{Thomas Y. Hou\footnote{Computing \& Mathematical Sciences, California Institute of
 Technology, Pasadena, CA 91125, USA}\and Zhen Lei\footnote{School of Mathematical Sciences; LMNS and Shanghai Key
  Laboratory for Contemporary Applied Mathematics, Fudan
  University, Shanghai 200433, P. R. China.}
\and  Shu Wang\footnote{College of Applied Sciences, Beijing
University of Technology, Beijing 100124, China. }\and Chen
Zou\footnote{Department of Mechanics and Aerospace Engineering, COE,
Peking University, Beijing 100871, China }}
\date{\today}
\maketitle

\begin{abstract}
We investigate the large time behavior of an axisymmetric  model for the
3D Euler equations. In \cite{HL09}, Hou and Lei proposed
a 3D model for the axisymmetric incompressible Euler and
Navier-Stokes equations with swirl. This model shares many
properties of the 3D incompressible Euler and Navier-Stokes equations.
The main difference between the 3D model of Hou and Lei and the
reformulated 3D Euler and Navier-Stokes equations is that the
convection term is neglected in the 3D model. In \cite{HSW09}, the
authors proved that the 3D inviscid model can develop a finite
time singularity starting from smooth initial data on a rectangular
domain. A global well-posedness result was also proved for a
class of smooth initial data under some smallness condition.
The analysis in \cite{HSW09} does not apply to the case when
the domain is axisymmetric and unbounded in the radial direction.
In this paper, we prove that the 3D inviscid model with
an appropriate Neumann-Robin boundary condition will develop
a finite time singularity starting from smooth initial data
in an axisymmetric domain. Moreover, we prove that the 3D inviscid
model has globally smooth solutions for a class of large smooth
initial data with some appropriate boundary condition.
\end{abstract}

\maketitle





\section{Introduction}

Whether the 3D incompressible Navier-Stokes equations can develop
a finite time singularity from smooth initial data with finite
energy is one of the most challenging questions in nonlinear
partial differential equations \cite{Fefferman}. There have been
many previous studies devoted to this challenging question,
see e.g. \cite{Constantin86,CS04,CFM96,DHY05,DHY06,Chae07} and
two recent review articles \cite{BT07,Constantin07}.
There have been a number of attempts to investigate possible
finite time singularity formation of the 3D Euler equations 
numerically, see e.g. \cite{GS91,Kerr93,BP94}. So far, the 
numerical evidence for a finite time blow-up is not yet conclusive 
\cite{HL06,HL07,Hou09}.

Global regularity
results for the 3D Navier-Stokes equations have been obtained
using energy estimates under some smallness assumption on the
initial data \cite{Lady70,MB02, LL11}. It is well known that the
convection term does not play an essential role in energy
estimates due to the incompressibility of the velocity field.
Thus, it is not clear how convection may contribute to global
well-posedness of the 3D incompressible Navier-Stokes equations.

In \cite{HL08}, Hou and Li investigated the stabilizing effect of 
convection via an exact 1D model. They found that the convection 
term plays an essential role in canceling the destablizing vortex 
stretching terms in this 1D model. This observation enabled them
to obtain a pointwise estimate via a Liapunov function which
controls the dynamic growth of the derivative of vorticity.
Motivated by the work of \cite{HL08}, Hou and Lei further 
investigated the role of convection by constructing the following 
3D model of the axisymmetric Navier-Stokes equations with swirl
\cite{HL09}: 
\begin{equation}\label{3DInviscidModel}
\begin{cases}
\partial_tu = \nu\big(\partial_r^2 + \frac{3}{r}\partial_r
  + \partial_z^2\big)u + 2 u\partial_z\psi,\\[-4mm]\\
\partial_t\omega = \nu\big(\partial_r^2 + \frac{3}{r}\partial_r
  + \partial_z^2\big)\omega + \partial_z(u^2),\\[-4mm]\\
- \big(\partial_r^2 + \frac{3}{r}\partial_r
  + \partial_z^2\big)\psi = \omega.
\end{cases}
\end{equation}
When $\nu=0$, we refer to the above model as the 3D inviscid model.
This model derived by using a reformulated Navier-Stokes
equations in terms of a set of new variables
$(u,\omega,\psi)=(u^\theta,\omega^\theta,\psi^\theta)/r$, where
$r=\sqrt{x_1^2+x_2^2}$, $u^\theta$ is the angular velocity,
$\omega^\theta$ the angular vorticity, and $\psi^\theta$ the
angular stream function, see \cite{HL08,HL09}. The only difference
between this 3D model and the reformulated Navier-Stokes equations
is that we neglect the convection term in the model. This new 3D
model shares several well known properties of the full 3D Euler or
Navier-Stokes equations \cite{HL09}. These include an energy
identity for smooth solutions, an artificial incompressible
constraint, a non-blowup criterion of Beale-Kato-Majda type
\cite{BKM84}, a non-blowup criterion of Prodi-Serrin type
\cite{Prodi,Serrin}, and a partial regularity result for the model
\cite{HL09b}, which is an analogue of the
Caffarelli-Kohn-Nirenberg theory \cite{CKN82,Lin88} for the full
Navier-Stokes equations.

Despite the striking similarity at the theoretical level, this 3D
model seems to have a very different behavior from that of the Euler
or Navier-Stokes equations. Numerical study in \cite{HL09} seems to
suggest that the model develop a potential finite time singularity
from smooth initial data with finite energy. However, the mechanism
that leads to the singular behavior of the 3D model seems to be
destroyed when the convection term is added back to the model.
In a recent paper \cite{HSW09}, Hou, Shi and Wang proved rigorously
that the inviscid model can indeed develop a finite time singularity
for a class of smooth initial data with some Neumann-Robin boundary
condition. Moreover, they proved a global well-posedness result for
a class of small smooth initial data. The analysis in \cite{HSW09}
was carried out for a rectangular domain, which does not
apply to the axisymmetric domain considered in this paper.

In this paper, we prove that the 3D inviscid model with some
appropriate boundary conditions of Neumann-Robin type can develop
a finite time singularity starting from smooth initial data on a
bounded or an unbounded exterior axisymmetric domain. Moreover, we
obtain a global well-posedness result for a class of large smooth
initial data under some Dirichlet boundary condition.

Our analysis in this paper is similar in spirit to that of
\cite{HSW09}. However, there are several new ingredients in this
work. The first one is that our blowup result applies to a bounded
or unbounded exterior domain in the axisymmetric geometry. The
local well-posedness of the 3D model with a boundary condition of
the Neumann-Robin type is more complicated than the case
considered in \cite{HSW09} and involves the use of the modified
Bessel function. The second ingredient is to construct a special
positive test function that satisfies several requirements. In the
case of a bounded domain, we use the first eigenfunction of the
Laplacian operator with homogeneous Dirichlet boundary condition.
In the case of an unbounded exterior domain, a positive and
decaying eigenfunction of the Laplacian operator does not exist.
This requires us to relax the constraints on the test function.
The third ingredient is an improved estimate which enables us to
establish the global regularity of the 3D inviscid model for a
class of initial boundary value problem with large initial data.

The paper is organized as follows: In Section 2 we will state our
main results and present our main ideas of their proofs.
Section 3 is devoted to proving the finite time singularity of the
3D inviscid model.  In Section 4 we present our global well-posedness
result. Finally, we prove the local well-posedness of the initial
boundary value problem in Section 5.

\section{Main Results}

In this section we set up our problem and state our main results.
First of all, let us recall that the 3D model
\eqref{3DInviscidModel} is formulated in terms of a set of new
variables 
$u = \frac{u^\theta}{r}$, $\omega =\frac{\omega^\theta}{r}$,
$\psi = \frac{\psi^\theta}{r}.$
Due to this change of variables, the original three-dimensional
Laplacian operator for $\psi^\theta$ has been changed into a
five-dimensional Laplacian operator for $\psi$ in the axisymmetric
cylindrical coordinate in \eqref{3DInviscidModel}. Thus we can
reformulate our 3D inviscid model in the three-dimensional
axisymmetric cylindrical domain
\begin{equation}\nonumber
D(\gamma_1, \gamma_2) = \{(x_1, x_2, z) \in \mathbb{R}^3: \gamma_1
\leq r = \sqrt{x_1^2 + x_2^2} < \gamma_2; 0 \leq z \leq 1\}
\end{equation}
as a five-dimensional problem in the
axisymmetric cylindrical domain
\begin{equation}\nonumber
\Omega(\gamma_1, \gamma_2) = \{(x_1, \cdots, x_4, z) \in
\mathbb{R}^5: \gamma_1 \leq r = \big(\sum_{j =
1}^4x_j^2\big)^{\frac{1}{2}} < \gamma_2; 0 \leq z \leq 1\} \;.
\end{equation}
It is much more convenient to perform the well-posedness and the
finite time blow-up analysis for our 3D model in the
five-dimensional setting. In the remaining part of the paper, we
will carry out our analysis in this axisymmetric five-dimensional
domain.

Denote
\begin{align}\label{set-1}
S_{\rm exterior}\equiv\left\{\beta > 0 \quad | \quad \beta\neq\frac{
k\int_0^{\infty}e^{-k\cosh(\theta)}\cosh^2(\theta)d\theta}{
\int_0^{\infty}e^{-k\cosh(\theta)}\cosh(\theta)d\theta},\ \forall
k\in\mathbb{Z}^{+}\right\}.
\end{align}
Now we are ready to state the first result of this paper which is
concerned with the local well-posedness of classical solutions to
the 3D inviscid model with a Neumann-Robin
type boundary condition on the exterior domain $\Omega(1, \infty)$.
\begin{thm}\label{thm-local-exterior}
Let $S_{\rm exterior}$ be defined in \eqref{set-1} and $\beta \in
S_{\rm exterior}$. Consider the initial-boundary value problem of
the 3D model \eqref{3DInviscidModel} with the initial data
\begin{equation}\label{data-1}
u_0 \in H^3(\Omega(1, \infty)),\ \ \psi_0 \in H^4(\Omega(1,
\infty)),
\end{equation}
\begin{equation}\label{data-11}
u_0^2 > 0\ {\rm for}\ z \neq 0\ {\rm and}\ z \neq 1,\quad u_0|_{z
= 0} = u_0|_{z = 1} = 0
\end{equation}
and the Neumann-Robin type boundary condition
\begin{equation}\label{bd-1}
(\psi_r + \beta\psi)|_{r = 1} = 0,\quad {\psi_z}|_{z = 0} =
{\psi_z}|_{z = 1} = 0.
\end{equation}
Then there exists a unique smooth solution $(u, \psi)$ to the 3D
inviscid model with the initial data
\eqref{data-1}-\eqref{data-11} and the boundary condition
\eqref{bd-1} on $[0, T)$ for some $T
> 0$. Moreover, we have
\begin{equation}\label{2-4-1}
u \in C\big([0, T), H^3(\Omega(1, \infty))\big),\quad \psi \in
C\big([0, T), H^4(\Omega(1, \infty))\big).
\end{equation}
\end{thm}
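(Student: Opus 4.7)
The plan is to set up a standard Picard iteration that decouples the ODE-type evolution of $u$ (the model has no convection term) from the elliptic problem for $\psi$, and to use the non-resonance condition $\beta\in S_{\rm exterior}$ to invert the five-dimensional Laplacian under the Neumann-Robin boundary data. Given an iterate $\psi^n$, I would define $u^{n+1}$ pointwise in $(r,z)$ by the explicit integrating-factor formula for $\partial_t u^{n+1}=2u^{n+1}\partial_z\psi^n$, which automatically preserves the sign condition \eqref{data-11} and the vanishing at $z=0,1$; set $\omega^{n+1}(t)=-\Delta_5\psi_0+\int_0^t\partial_z\bigl((u^{n+1})^2\bigr)\,ds$; and recover $\psi^{n+1}$ by solving
\begin{equation*}
-\Delta_5\psi^{n+1}=\omega^{n+1},\qquad (\psi_r^{n+1}+\beta\psi^{n+1})\big|_{r=1}=0,\qquad \psi_z^{n+1}\big|_{z=0,1}=0,
\end{equation*}
together with a decay condition at $r=\infty$. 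Once an elliptic estimate of the form $\|\psi\|_{H^{s+2}(\Omega(1,\infty))}\lesssim\|\omega\|_{H^s(\Omega(1,\infty))}$ is available for $s\leq 2$, a contraction in $C([0,T];H^3)\times C([0,T];H^4)$ for $T$ small follows from standard Moser-type product estimates and gives existence, uniqueness, and the time continuity \eqref{2-4-1}.

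The core of the argument is therefore the elliptic solvability on the unbounded exterior domain. Exploiting the Neumann condition at $z=0,1$, I would expand $\psi$ and $\omega$ in the cosine basis $\cos(k\pi z)$, reducing the PDE to the sequence of radial problems
\begin{equation*}
-\Bigl(\partial_r^2+\tfrac{3}{r}\partial_r\Bigr)\hat\psi_k+(k\pi)^2\hat\psi_k=\hat\omega_k,\quad r>1,\qquad \hat\psi_k'(1)+\beta\hat\psi_k(1)=0,\qquad \hat\psi_k(r)\to 0\ \text{as}\ r\to\infty.
\end{equation*}
For $k\geq 1$ the substitution $\hat\psi_k=\phi_k/r$ turns the homogeneous operator into the modified Bessel equation of order one, whose decaying fundamental solution is $K_1(k\pi r)/r$ with the integral representation $K_1(x)=\int_0^\infty e^{-x\cosh\theta}\cosh\theta\,d\theta$. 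A direct computation of the Robin boundary form on this decaying solution yields a resonance condition of the form $\beta=\Phi(k)$, where $\Phi(k)$ is (up to the $\pi$ rescaling) precisely the ratio excluded in the definition of $S_{\rm exterior}$. The hypothesis $\beta\in S_{\rm exterior}$ then guarantees that the mode-wise homogeneous BVP has only the trivial decaying solution, and the Fredholm alternative yields a unique solution built from the Green's function with $I_1(k\pi r)/r$ and $K_1(k\pi r)/r$. The $k=0$ mode is treated separately with the fundamental solutions $1$ and $r^{-2}$; the decaying solution $r^{-2}$ satisfies the Robin condition nontrivially for every $\beta>0$ and imposes no further restriction.

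Reassembling $\psi$ by Parseval and estimating mode-by-mode with the Green's function produces the elliptic bound $\|\psi\|_{H^{s+2}}\lesssim\|\omega\|_{H^s}$. Higher-order regularity near $r=1$ and $z=0,1$ comes from commuting the equation with tangential derivatives and using the equation itself to recover normal derivatives, which is standard once the main estimate is in hand.

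The principal obstacle, compared with the rectangular-domain analysis of \cite{HSW09}, is producing a mode-uniform bound for the elliptic inversion on the unbounded exterior domain. One must quantify the distance from $\beta$ to the exceptional set in order to obtain a $k$-independent constant, and carefully control the only algebraically decaying $k=0$ mode using weighted $L^2(r^3\,dr)$ estimates natural to the five-dimensional radial measure. The exponential decay of $K_1(k\pi r)$ in $k$ gives coercivity for the high modes, while the low modes are handled by an explicit computation using the exceptional-set hypothesis. Once these delicate bounds are in place, the remaining iteration and contraction arguments are routine.
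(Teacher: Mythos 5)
Your proposal matches the paper's strategy in all essentials: the theorem is reduced to an elliptic solvability lemma for the Neumann--Robin problem on $\Omega(1,\infty)$ (with the iteration/contraction argument carried over from the rectangular-domain analysis of \cite{HSW09}, exactly as you indicate), and that lemma is proved by a cosine expansion in $z$, the substitution $\widehat{\psi}(r,k)=f(rk,k)/r$ converting the radial operator into the modified Bessel equation of order one, the decaying solution $K_1$ with precisely the integral representation you cite, and the observation that the Robin form applied to this decaying solution is nonvanishing exactly when $\beta\in S_{\rm exterior}$. The one organizational difference is that the paper does not construct a full Green's kernel from $I_1$ and $K_1$: it first splits $\psi=\psi^{(1)}+\psi^{(2)}$, solving for $\psi^{(1)}$ with the inhomogeneity $\omega$ under a homogeneous Dirichlet condition at $r=1$ by standard elliptic theory (no Bessel analysis needed there), and then corrects the Robin boundary value by a solution $\psi^{(2)}$ of the \emph{homogeneous} equation, which is explicitly $C(k)K(rk)/r$ mode by mode with $C(k)$ determined by the non-resonance condition; this sidesteps the mode-uniform Green's-function estimates you single out as the principal obstacle, at the cost of only having to bound an explicit one-parameter family. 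One small caution: your assertion that the $k=0$ mode ``imposes no further restriction for every $\beta>0$'' is not quite right, since the decaying solution $r^{-2}$ annihilates the Robin form exactly at $\beta=2$, so that value is also exceptional for the correction problem (a point on which the paper's $S_{\rm exterior}$, indexed only by $k\in\mathbb{Z}^{+}$, is likewise silent).
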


The proof of Theorem \ref{thm-local-exterior} relies on an
important property of the elliptic operator with the mixed
Neumann-Robin type boundary condition. Note that the boundary
condition in \eqref{bd-1} is not the third type. To recover the
standard elliptic regularity estimate, we need to study the
spectral property of the differential operator which requires that
$\beta \in S_{\rm exterior}$. We remark that the local
well-posedness analysis for \eqref{3DInviscidModel} with a mixed
Dirichlet-Robin boundary condition has been established in
\cite{HSW09} where the non-standard boundary condition is imposed
on the axial direction. Here our non-standard boundary condition
is imposed on the radial direction. The local well-posedness in
such case involves the use of modified Bessel function and is more
involved. The proof of Theorem \ref{thm-local-exterior} will be
deferred to Section 5.

Next, we state the finite time blowup result of the exterior
problem \eqref{3DInviscidModel}, \eqref{data-1},\eqref{data-11}
and \eqref{bd-1}.
\begin{thm} \label{thm-bu-exterior}
Suppose that all the assumptions in Theorem
\ref{thm-local-exterior} are satisfied.
Let $\beta\geq2+2\sqrt{1+\frac{\pi^2}{4}}$,
$\alpha=\frac{\beta}{2}$ and
\begin{align}\label{2-4}
\phi(r,z)=\theta(r)\sin(\pi z),\ \theta(r)=e^{-\alpha r^2},\
\Phi(r,z)=\phi_{rr}+\frac{3}{r}\phi_r+\phi_{zz}.
\end{align}
Then the solution of the 3D inviscid model
will develop a finite time singularity in the $H^3$ norm provided
that
\begin{align}\label{2-5}
\int_0^1 \int_1^\infty \Phi\log (u^2_0) r^3 drdz > 0,\quad
\int_0^1 \int_1^\infty \Phi\psi_{0z} r^3 drdz > 0
\end{align}
and
\begin{eqnarray}\label{2-6}
\int_0^1 \int_1^\infty\partial_z\psi_0 \Phi r^3drdz\Big)^2
\geq \frac{16}{c_0}\Big(\int_0^1 \int_1^\infty(\log u_0^2)\Phi
r^3drdz\Big)^3,
\end{eqnarray}
where $c_0$ is a positive constant defined in \eqref{c0}.
\end{thm}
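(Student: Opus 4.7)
The plan is to introduce the functionals $P(t)=\int_0^1\!\int_1^\infty \log(u^2)\,\Phi\,r^3\,\mathd r\,\mathd z$ and $Q(t)=\int_0^1\!\int_1^\infty \psi_z\,\Phi\,r^3\,\mathd r\,\mathd z$, derive a closed coupled system for $(P,Q)$, and show that the hypotheses \eqref{2-5}--\eqref{2-6} force $P(t)\to+\infty$ in finite time. Since $\Phi\,r^3$ is integrable on $\Omega(1,\infty)$, a uniform $L^\infty$ bound on $u$ would give a uniform upper bound on $P$; hence blowup of $P$ forces blowup of $\|u\|_{L^\infty}$ and therefore of $\|u\|_{H^3}$ by Sobolev embedding. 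Three properties of the test function underlie the argument: a direct computation gives $\Phi(r,z)=(4\alpha^2 r^2-8\alpha-\pi^2)\,e^{-\alpha r^2}\sin(\pi z)$, which is nonnegative on $\Omega(1,\infty)$ precisely because $\beta\geq 2+2\sqrt{1+\pi^2/4}$ makes $4\alpha^2-8\alpha-\pi^2\geq 0$; the relation $\alpha=\beta/2$ yields $\phi_r|_{r=1}=-\beta\,\phi|_{r=1}$, so $\phi$ (and $\phi_z$) satisfies the same Robin condition as $\psi$; and $\phi,\Phi$ vanish at $z=0,1$ with Gaussian decay as $r\to\infty$.

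The identity $\partial_t u=2u\partial_z\psi$ yields $\partial_t\log(u^2)=4\partial_z\psi$ and hence $P'(t)=4Q(t)$. For $Q'$, differentiate $-\Delta_5\psi=\omega$ in $t$ to get $-\Delta_5\partial_t\psi=\partial_z(u^2)$, where $\Delta_5=\partial_r^2+(3/r)\partial_r+\partial_z^2$. Writing $\Phi=\Delta_5\phi$ and applying Green's identity in the weighted five-dimensional sense swaps the Laplacian onto $\partial_t\psi_z$; the boundary contributions at $r=1$ cancel because $\phi_z$ and $\partial_t\psi_z$ inherit the matching Robin condition from $\phi$ and $\partial_t\psi$, those at $r=\infty$ vanish by the Gaussian decay of $\phi,\phi_r$ combined with the $H^4$-regularity from Theorem~\ref{thm-local-exterior}, and those at $z=0,1$ vanish because $\partial_t\psi_z|_{z=0,1}=0$ and $u|_{z=0,1}=0$ are preserved by the evolution. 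A double integration by parts in $z$ together with $\phi_{zz}=-\pi^2\phi$ then produces $Q'(t)=\pi^2\int_0^1\!\int_1^\infty u^2\,\phi\,r^3\,\mathd r\,\mathd z\geq 0$, so $Q$ is nondecreasing.

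The heart of the proof is to bound $\int u^2\phi\,r^3\,\mathd r\,\mathd z$ from below in terms of $P$, even though $P$ is paired with $\Phi$ rather than $\phi$. Setting $N=\int\Phi\,r^3\,\mathd r\,\mathd z$ and $\mathd\nu=\Phi\,r^3\,\mathd r\,\mathd z/N$, and writing $u^2\phi=(u^2\,\phi/\Phi)\,\Phi$, Jensen's inequality applied to the probability measure $\mathd\nu$ and the positive integrand $u^2\phi/\Phi$ gives $\int u^2\phi\,r^3\,\mathd r\,\mathd z\geq c_0\,\exp(P(t)/N)$, where $c_0=N\exp(-\int\log(\Phi/\phi)\,\mathd\nu)>0$ is a fully explicit constant depending only on $\alpha$ (this should be the $c_0$ of \eqref{c0}). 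Condition \eqref{2-5} ensures $P(0)>0$ and $Q(0)>0$, and since $Q'\geq 0$ we have $P(t)\geq P(0)>0$ while the solution persists; the elementary bound $e^x\geq x^2/2$ for $x\geq 0$ then converts the Jensen estimate into $Q'(t)\geq (c_0/2N^2)\,P(t)^2$. Multiplying $P''=4Q'$ by $P'$ and integrating yields $(P'(t))^2\geq 16Q(0)^2-(4c_0/3N^2)P(0)^3+(4c_0/3N^2)P(t)^3$, and hypothesis \eqref{2-6} is designed precisely to make the constant term nonnegative, so $P'\geq CP^{3/2}$, which integrates to a finite blowup time $t^\ast$.

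The main technical obstacle will be the rigorous justification of Green's identity at both endpoints of $[1,\infty)$: verifying that $\psi$, $\partial_t\psi$, and their first radial derivatives decay fast enough relative to $r^3 e^{-\alpha r^2}$ at infinity (for which the weighted Sobolev regularity of Theorem~\ref{thm-local-exterior} is needed), and confirming that the Robin condition at $r=1$ is inherited by $\partial_t\psi$ and $\phi_z$ in exactly the form needed for the boundary term to cancel. The conceptual novelty compared to the rectangular analysis of \cite{HSW09} is the absence of a positive decaying eigenfunction of $\Delta_5$ on the unbounded domain; this forces the use of the Gaussian $e^{-\alpha r^2}$ for the test function and introduces the mismatch between $\phi$ and $\Phi$, which is absorbed through the Jensen step against the reference measure $\mathd\nu$.
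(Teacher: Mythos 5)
Your proposal is correct and follows the same architecture as the paper's proof: the same Gaussian test function $\phi=e^{-\alpha r^2}\sin(\pi z)$ with $\Phi=(4\alpha^2r^2-8\alpha-\pi^2)\phi\ge 0$ for $r\ge1$, the same two functionals $P=\int(\log u^2)\Phi\,r^3$ and $Q=\int\psi_z\Phi\,r^3$, the same identities $P'=4Q$ and $Q'=\pi^2\int u^2\phi\,r^3$ (with the boundary term at $r=1$ killed by $\theta_r(1)+\beta\theta(1)=0$, exactly as you describe), the same ODE blow-up via $(P')^2\gtrsim P^3$, and the same contradiction with persistence of regularity. The one step where you genuinely diverge is the closure inequality $\int u^2\phi\,r^3\gtrsim P^2$: the paper gets it from the elementary pointwise bound $\log^+(u^2)\le 2|u|$ followed by Cauchy--Schwarz in the form $\big(\int|u|\Phi\big)^2\le\int u^2\phi\cdot\int(\Phi/\phi)^2\phi$, which produces exactly the constant $c_0$ of \eqref{c0}; you instead use Jensen's inequality against the probability measure $\mathd\nu=\Phi r^3\mathd r\,\mathd z/N$ followed by $e^x\ge x^2/2$. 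Your route is valid (the borderline case where $\Phi/\phi$ vanishes at $r=1$ is harmless since $t\log t$ is bounded near $0$, so your normalizing constant is positive and finite), and by a further application of Jensen one sees $e^{-\int\log(\Phi/\phi)\mathd\nu}/(2N)\ge 1/\big(4\int(\Phi/\phi)^2\phi\big)$, so your differential inequality is in fact no weaker than the paper's. The only caveat is that your constant is not literally the $c_0$ of \eqref{c0}, so to prove the theorem exactly as stated you must either run the paper's Cauchy--Schwarz variant or record the comparison of constants just indicated; your final contradiction via $P\le N\log^+\|u\|_{L^\infty}^2$ and Sobolev embedding is also slightly less economical than the paper's, which only needs $P\le 4\alpha^2e^{-\alpha}\int u^2r^3$, i.e.\ a weighted $L^2$ bound.
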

The proof of Theorem \ref{thm-bu-exterior} is in spirit similar to
that of Theorem 1.1 in \cite{HSW09}. The main ingredient here is
to construct a special positive test function which meets our
requirements on the unbounded domain case. In
\cite{HSW09}, the authors can take the product of sine
functions in the $x-y$ domain as a testing function since the
domain is a rectangular domain. In the case of a unbounded exterior
domain in the axisymmetric geometry, a positive and decaying
eigenfunction does not seem to exist. This requires us to relax
the constraints on the test function. We will present the details
of the proof of Theorem \ref{thm-bu-exterior} in Section 3.

\begin{rem}
We remark that similar results in Theorem \ref{thm-local-exterior}
and \ref{thm-bu-exterior} are also true in the bounded domain case
$\Omega(0, 1)$. We will not present this result here. A more interesting
result would be to obtain a similar result for the case of
$\Omega(0, \infty)$ with an initial boundary condition whose energy is
conserved in time. This would require a different technique.
We will report it in a forthcoming paper.
\end{rem}

In Theorem 1.1 of \cite{HSW09}, the authors proved the finite time
singularity of the 3D inviscid model in a
rectangular domain. Their Robin boundary condition is imposed on
the axial boundary $z = 1$.  Our next theorem extends their result
to the case of a bounded axisymmetric domain $\Omega(0, 1)$.

Let $(\lambda_k, \theta_k(\Bx))$ be the eigenvalue-eigenfunction
pair of the following Dirichlet eigenvalue problem:
\begin{align}\label{eg1}
\begin{cases}
-(\partial_r^2 + \frac{3}{r}\partial_r)  \theta_k=\lambda_k \theta_k,\quad r\in[0,1),\\
\theta_k|_{r=1}=0.
\end{cases}
\end{align}
Define
\begin{eqnarray}\label{2-8}
S_{\rm interior} \equiv \{\beta > 0 \;|\; \beta\ne \lambda_k,
\beta \neq
\frac{\sqrt{\lambda_k}(e^{\sqrt{\lambda_k}}+e^{-\sqrt{\lambda_k}})}{e^{\sqrt{\lambda_k}}-e^{-\sqrt{\lambda_k}}}
\mbox{ for all } \; k=1, 2, \cdots\},
\end{eqnarray}
and
\begin{align}\label{phi-tf}
 \phi(\Bx,z)=\frac{e^{-\alpha (z-1)}+e^{\alpha
(z-1)}}{2} \theta_1(\Bx).
\end{align}
Moreover, we assume that $\alpha$, $\beta$ satisfy
\begin{equation}\label{2-10}
0<\al<\sqrt{\lambda_1},\quad \beta = \frac{\lambda_1}{\al}
\frac{e^{\al }-e^{-\al }}{ e^{\al }+e^{-\al }}
>{\sqrt{\lambda_1}}\left(\frac{e^{\sqrt{\lambda_1}}+e^{-\sqrt{\lambda_1}}}
{e^{\sqrt{\lambda_1}}-e^{-\sqrt{\lambda_1}}} \right ).
\end{equation}
We can prove the following local-well-posedness and finite time blow-up
results.
\begin{thm}\label{thm-bd}
(A) {\it Local well-posedness}.
Let $S_{\rm interior}$ be defined in \eqref{2-8} and $\beta \in
S_{\rm interior}$. Consider the initial-boundary value problem of
the 3D model \eqref{3DInviscidModel} with the initial data
\begin{equation}\label{data-2}
u_0 \in H^3(\Omega(0, 1)),\ \ \psi_0 \in H^4(\Omega(0, 1)),
\quad u_0|_{z = 0} = u_0|_{z = 1} = 0
\end{equation}
and the Dirichlet-Robin type boundary condition
\begin{equation}\label{bd-2}
\psi|_{r = 1} = \psi|_{z = 1} = 0,\quad {\psi_z + \beta\psi}|_{z =
0} = 0.
\end{equation}
Then there exists a unique smooth solution $(u, \psi)$ to the 3D
inviscid model with the initial data
\eqref{data-2} and the boundary condition \eqref{bd-2} on $[0, T)$
for some $T > 0$. Moreover, we have
\begin{equation}\label{2-13}
u \in C\big([0, T), H^3(\Omega(0,1))\big),\quad \psi \in
C\big([0, T), H^4(\Omega(0,1))\big).
\end{equation}
(B) {\it Finite time blow-up}.
Suppose \eqref{2-10} is satisfied and $u_0^2 > 0 $ for $0 < z < 1$.
If $u_0$ and $\psi_0$ satisfy
\begin{align}\nonumber
\int_0^1 \int_0^1 (\log u_0^2)\phi r^3\mathd {\Bx}\mathd z > 0,\quad
\int_0^1 \int_0^1 \psi_{0z}\phi r^3\mathd {\Bx}\mathd z
> 0,
\end{align}
and
\begin{eqnarray}\nonumber
\Big(\int_0^1 \int_0^1\partial_z\psi_0 \phi r^3drdz\Big)^2 \geq
\frac{1}{c_1}\Big(\int_0^1 \int_0^1 (\log u_0^2)\phi
r^3drdz\Big)^3,
\end{eqnarray}
for some absolute constant $c_1 > 0$, then the
solution of the 3D inviscid model will
develop a finite time singularity in the $H^3$ norm.
\end{thm}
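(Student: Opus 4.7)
For part (A), the plan is to adapt the argument of Theorem~\ref{thm-local-exterior} to the interior domain $\Omega(0,1)$ under the Dirichlet--Robin boundary condition \eqref{bd-2}. The key step is elliptic regularity for $-(\partial_r^2+\frac{3}{r}\partial_r+\partial_z^2)\psi=\omega$: expanding $\psi,\omega$ in the radial Dirichlet eigenbasis $\{\theta_k(r)\}$ of \eqref{eg1} reduces the problem to a family of scalar two-point BVPs $-c_k''+\lambda_k c_k = d_k$ on $[0,1]$ with $c_k(1)=0$ and $c_k'(0)+\beta c_k(0)=0$. The defining condition $\beta\neq\sqrt{\lambda_k}\coth\sqrt{\lambda_k}$ in $S_{\rm interior}$ excludes precisely the Wronskian-vanishing resonances, so each mode is uniquely solvable with quantitative Green's-function estimates; summing gives $\|\psi\|_{H^4}\le C\|\omega\|_{H^2}$, and a standard Picard iteration in $C([0,T];H^3)$ then produces \eqref{2-13}.

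For part (B), I would follow the ODE blow-up template of \cite{HSW09}. First verify that $\phi>0$ in the interior of $\Omega(0,1)$, $\phi|_{r=1}=0$, and $-(\partial_r^2+\frac{3}{r}\partial_r+\partial_z^2)\phi = (\lambda_1-\alpha^2)\phi =:\mu^2\phi$ with $\mu^2>0$ from $\alpha<\sqrt{\lambda_1}$. Introduce
\[
I(t) := \int_0^1\!\!\int_0^1 (\log u^2)\phi\, r^3\, dr\, dz, \qquad J(t) := \int_0^1\!\!\int_0^1 \psi_z \phi\, r^3\, dr\, dz,
\]
both positive at $t=0$ by hypothesis. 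From $\partial_t u = 2u\psi_z$ one reads $\partial_t(\log u^2)=4\psi_z$ on $\{u^2>0\}=\{0<z<1\}$, so $\dot I = 4J$.

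To bound $\dot J$ from below, differentiate the elliptic equation in $t$ to obtain $-(\partial_r^2+\frac{3}{r}\partial_r+\partial_z^2)\psi_t = \partial_z(u^2)$. The auxiliary function $\phi_z = \alpha\sinh(\alpha(z-1))\theta_1(r)$ is another eigenfunction with the same eigenvalue $\mu^2$ and vanishes on $r=1$ and $z=1$. Testing against $\phi_z$ via Green's identity in the $r^3$-weighted 5D sense, using $\phi_{zz}=\alpha^2\phi$ and the vanishing $u|_{z=0}=u|_{z=1}=0$ to eliminate the $u^2$-boundary contributions in $z$, together with the direct IBP $\int\psi_{tz}\phi\,r^3 = -\int\psi_t\phi_z\,r^3 + [\psi_t\phi]_0^1\,r^3\,dr$, produces an identity of the form
\[
\dot J = \frac{\alpha^2}{\mu^2}\int u^2\phi\, r^3\, dr\, dz + \mathcal{B}(t),
\]
where $\mathcal{B}(t)$ collects the boundary correction supported at $z=0$. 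Expanding $\psi_t$ in the eigenbasis $\{\theta_k\}$, the quantity $\mathcal{B}(t)$ is proportional to $c_1(0)$, where $c_1(z)$ is the first Fourier coefficient and satisfies a scalar BVP on $[0,1]$ driven by $\partial_z g$ with $g(z) = (u^2(\cdot,z),\theta_1)_{L^2_{r^3}}\ge 0$, $g(0)=g(1)=0$. The Wronskian of this 1D BVP is proportional to $\beta-\sqrt{\lambda_1}\coth\sqrt{\lambda_1}$, which is positive precisely under \eqref{2-10}; the resulting explicit sign of the Green's function, combined with $g\ge 0$ and an integration by parts exploiting $g(0)=g(1)=0$, forces $c_1(0)\le 0$ and hence $\mathcal{B}(t)\ge 0$. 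Thus $\dot J \ge (\alpha^2/\mu^2)\int u^2\phi\, r^3\, dr\, dz$. Jensen's inequality for $\exp$ against the positive measure $\phi\, r^3\, dr\, dz$ of total mass $C_\phi$ gives $\int u^2\phi\, r^3 \ge C_\phi e^{I/C_\phi}$, and the bound $e^x\ge x^2/2$ for $x\ge 0$ yields $\int u^2\phi\, r^3 \ge I^2/(2C_\phi)$. Hence $\dot J \ge c_0 I^2$ for an absolute constant $c_0>0$.

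Combining, $\ddot I = 4\dot J \ge 4c_0 I^2$; multiplying by $\dot I > 0$ and integrating in time produces $\dot I^2 \ge \frac{8c_0}{3} I^3 + (\dot I_0^2 - \frac{8c_0}{3} I_0^3)$. The hypothesis $J_0^2 \ge I_0^3/c_1$ with $c_1$ matched to $c_0$ makes the bracketed constant non-negative, so $\dot I \ge \sqrt{8c_0/3}\, I^{3/2}$, a Bernoulli inequality forcing $I(t)\to\infty$ in finite time $T^\ast$; by Sobolev embedding this in turn forces blow-up of $\|u\|_{H^3}$, contradicting smooth existence past $T^\ast$. The principal technical obstacle is the sign analysis of $\mathcal{B}(t)$: it reduces to computing the sign of the Wronskian of the 1D Green's function in the $z$-direction at radial mode $k=1$, which is exactly why the algebraic condition $\beta>\sqrt{\lambda_1}\coth\sqrt{\lambda_1}$ is imposed in \eqref{2-10}; identifying the explicit value of $c_1$ in the theorem statement then amounts to tracking the constants produced by this sign analysis, Jensen's inequality, and the Bernoulli comparison.
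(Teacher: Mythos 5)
Your proposal is correct and follows essentially the route the paper intends: the paper omits the proof of Theorem \ref{thm-bd} and defers to the argument of \cite{HSW09}, and your reconstruction --- the radial eigenfunction expansion with the resonance condition $\beta \neq \sqrt{\lambda_k}\coth\sqrt{\lambda_k}$ for part (A), and for part (B) the identities $\dot I = 4J$ and $\dot J \geq c\int u^2\phi\, r^3$ with the leftover $z=0$ boundary term reduced to the first radial mode and signed via the Wronskian condition $\beta > \sqrt{\lambda_1}\coth\sqrt{\lambda_1}$ from \eqref{2-10}, followed by the Riccati-type blow-up --- is exactly that template, which also mirrors the exterior-domain proof in Section 3. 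The only cosmetic deviation is your use of Jensen's inequality plus $e^x \geq x^2/2$ to obtain $\int u^2\phi\, r^3 \geq c\, I^2$, where the paper's Section 3 analogue uses $\log^+ u^2 \leq 2|u|$ together with Cauchy--Schwarz.
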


The local well-posedness of the initial-boundary value problem for
3D model in Theorem \ref{thm-bd} can be carried out using almost
the same argument as that in \cite{HSW09} by replacing the first
eigenfunction $\sin \pi x_1\sin\pi  x_2\sin\pi  x_3\sin\pi  x_4$
of the rectangular domain by the first eigenfunction $\phi_1(x)$ of the
unit ball in $\mathbb{R}^4$. The finite time blow-up result can
be proved in exactly the same way as in \cite{HSW09}.
Since the analysis is essentially the same as that in \cite{HSW09},
we will omit the proof of Theorem \ref{thm-bd} in this paper.

The last theorem extends the global existence result in Theorem
6.1 in \cite{HSW09} to the axisymmetric domain $\Omega(0, 1)$.
The most interesting aspect of this result is that the initial
condition can be made as large as we wish in the Sobolev space.
This is achieved by effectively imposing a sufficiently large
negative boundary
condition for $\partial_z\psi |_{\partial \Omega(0, 1)} = -M$
with $M$ being a large positive constant.

\begin{thm}\label{thm1.2}
Let $M>0 $ an arbitrarily large constant and $s \geq 3$.
Assume that $u_0 \in H^s(\Omega(0, 1))$,  $\psi_0 \in
H^{s+1}(\Omega(0, 1))$ with $u_0(r, 0) = u_0(r, 1) = 0$ and
$\psi_0|_{r = 1} = - Mz$, $\partial_z\psi_0|_{z = 0} =
\partial_z\psi_0|_{z = 1} = - M$. Then the 3D inviscid model
with the initial and boundary data
\begin{equation}\label{data-3}
u(0, \cdot) = u_0(\cdot),\quad \psi(0, \cdot) = \psi_0(\cdot),
\end{equation}
and
\begin{equation}\label{data-31}
\psi(t, 1, z) = - Mz,\ \partial_z\psi(t, r, 0) = \partial_z\psi(t,
r, 1) = -M
\end{equation}
admits a unique global smooth solution $(u, \psi)$ with $u \in
C\big([0, \infty); H^s(\Omega(0, 1))\big)$, $\psi \in C\big([0,
\infty); H^{s+1}(\Omega(0, 1))\big)$ provided that
\begin{eqnarray}\label{data-4}
\|\nabla \partial_z\psi_0\|_{H^{s-1}} \leq M/(8C_s^2), \quad
\|u_0^2\|_{H^s} \leq M^2/(4C_s^3),
\end{eqnarray}
where $C_s$ is an absolute positive constant depending on $s$ and
$\Omega$ only. Moreover, we have
\begin{equation}\label{ap-3}
\|u(t)^2\|_{H^s(\Omega(0, 1))}\le
C_s\|u_0^2\|_{H^s(\Omega(0, 1))}e^{-2Mt},
\end{equation}
and
\begin{equation}\label{ap-4}
\|\nabla\partial_z\psi\|_{H^{s-1}(\Omega(0, 1))}\le
\|\nabla\partial_z\psi_0\|_{H^{s-1}(\Omega(0, 1))} +
\frac{C_s}{2M}\|u_0^2\|_{H^{s}(\Omega(0, 1))}.
\end{equation}
\end{thm}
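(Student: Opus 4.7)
The plan is to run a bootstrap argument that exploits the time-independent boundary data in \eqref{data-31} to produce an effective exponential damping in the $u$-equation. First I would shift out the boundary by setting $\tilde\psi = \psi + Mz$, so that $\tilde\psi|_{r=1}=0$ and $\partial_z\tilde\psi|_{z=0,1}=0$. Since the five-dimensional operator $(\partial_r^2 + \frac{3}{r}\partial_r + \partial_z^2)$ annihilates $Mz$, the Poisson constraint becomes $-(\partial_r^2 + \frac{3}{r}\partial_r + \partial_z^2)\tilde\psi = \omega$ with homogeneous mixed Dirichlet-Neumann data. The $u$-equation rewrites as $\partial_t u = -2Mu + 2u\,\partial_z\tilde\psi$, so
\[
\partial_t(u^2) = -4M u^2 + 4 u^2\,\partial_z\tilde\psi.
\]
Local well-posedness is obtained as in Theorem \ref{thm-bd}(A) working in the shifted variable $\tilde\psi$; combined with a standard continuation criterion of Beale-Kato-Majda type for the 3D model, this reduces global existence to an a priori bound on $\|u^2\|_{H^s}$ and $\|\nabla\partial_z\psi\|_{H^{s-1}}$.

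Next I would set up the bootstrap: on the maximal interval of smooth existence assume
\[
\|\nabla\partial_z\psi(t)\|_{H^{s-1}(\Omega(0,1))} \leq \frac{M}{4C_s^2},
\]
which is satisfied initially by \eqref{data-4} and continuity. Applying $\partial^\alpha$ with $|\alpha|\leq s$ to $\partial_t(u^2)$, pairing with $\partial^\alpha(u^2)$, and using a Moser tame estimate together with the Sobolev embedding $H^s\hookrightarrow L^\infty$ (valid in 5D for $s\geq 3$), I get
\[
\frac{1}{2}\frac{\mathd}{\mathd t}\|u^2\|_{H^s}^2 \leq -4M\|u^2\|_{H^s}^2 + C_s \|\partial_z\tilde\psi\|_{H^s}\|u^2\|_{H^s}^2.
\]
Poincar\'e (using $\partial_z\tilde\psi|_{z=0,1}=0$) together with standard elliptic regularity for the homogeneous mixed boundary value problem then gives $\|\partial_z\tilde\psi\|_{H^s} \leq C_s\|\nabla\partial_z\tilde\psi\|_{H^{s-1}} \leq M/(4C_s)$. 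Absorbing this into the negative linear term and invoking Gronwall yields the exponential decay estimate \eqref{ap-3}.

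Finally I would close the bootstrap. Since \eqref{data-31} is independent of $t$, the time derivative $\partial_t\psi$ solves
\[
-(\partial_r^2+\frac{3}{r}\partial_r+\partial_z^2)\partial_t\psi = \partial_t\omega = \partial_z(u^2)
\]
with homogeneous boundary conditions $\partial_t\psi|_{r=1}=0$ and $\partial_z\partial_t\psi|_{z=0,1}=0$. Elliptic regularity gives $\|\partial_t\psi\|_{H^{s+1}}\leq C_s\|u^2\|_{H^s}$, so
\[
\|\nabla\partial_z\psi(t)-\nabla\partial_z\psi_0\|_{H^{s-1}} \leq C_s\int_0^t\|u^2(\tau)\|_{H^s}\mathd\tau \leq \frac{C_s}{2M}\|u_0^2\|_{H^s}
\]
by the exponential decay from the previous step, which proves \eqref{ap-4}. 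Combining with \eqref{data-4} gives $\|\nabla\partial_z\psi(t)\|_{H^{s-1}} \leq M/(8C_s^2) + M/(8C_s^2) = M/(4C_s^2)$, closing the bootstrap (after a standard strict-inequality adjustment by sharpening the numerical constants) and hence yielding global smoothness.

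The main obstacle is the consistent bookkeeping of the constants: the tame product, the Poincar\'e inequality, and the elliptic regularity on the mixed Dirichlet-Neumann problem for $\Omega(0,1)$ each contribute a constant, and all these must be absorbed into a single $C_s$ compatible with \eqref{data-4}. The elliptic regularity itself is standard in the five-dimensional axisymmetric geometry and, because the boundary conditions on $\{r=1\}$ and $\{z=0,1\}$ are of different types, no spectral obstruction of the kind arising in Theorem \ref{thm-local-exterior} appears at the corner.
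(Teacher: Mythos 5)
Your argument is correct and is essentially the paper's own proof: the paper works with $\widetilde u=u^2$ and $v=-\partial_z\psi-M$ (equivalent to your shift $\tilde\psi=\psi+Mz$, since $\partial_z\tilde\psi=-v$), derives the same damped equation $\widetilde u_t=-4M\widetilde u-4\widetilde u v$, and closes the same bootstrap via the Sobolev algebra property, Poincar\'e for the homogeneous boundary data, and Dirichlet elliptic regularity for $v_t=\Delta_5^{-1}\partial_z^2\widetilde u$. The only cosmetic difference is that the paper takes the bootstrap threshold to be $M/(2C_s^2)$ so that the closure at $M/(4C_s^2)$ is strict, which removes the strict-inequality adjustment you flag at the end.
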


We will present the proof of Theorem \ref{thm1.2} in Section 4.

\section{Finite Time Singularity in the Exterior Domain}

In this section, we present the proof of Theorem
\ref{thm-bu-exterior}, which shows that the 3D inviscid model develops
a finite time singularity in the exterior domain.

\noindent
{\bf Proof of Theorem \ref{thm-bu-exterior}}.
We will prove Theorem \ref{thm-bu-exterior} by
contradiction, as in \cite{HSW09}. By the local well-posedness result,
we know that the initial boundary value problem of the 3D inviscid model
has a unique solution $u \in H^3(\Omega(1, \infty))$ and
$\psi \in H^4(\Omega(1, \infty))$ for $ 0 \leq t \leq T$ for some $T>0$.
Let $T_b$ be the largest time for which our 3D inviscid
model has a unique smooth solution $u \in H^3(\Omega)$ and
$\psi\in H^4(\Omega)$ for $ 0 \leq t < T_b$. We will prove that
$T_b < \infty$. Suppose that $T_b = \infty$. Then we have
\begin{equation}\label{3-1}
u \in C\big([0, \infty), H^3(\Omega(1, \infty))\big),\quad \psi
\in C\big([0, \infty), H^4(\Omega(1, \infty))\big).
\end{equation}
We will prove that \eqref{3-1} would lead to a contradiction.

Define $\phi$ and $\Phi$ as in Theorem \ref{thm-bu-exterior}.
By a straightforward calculation, we have
\begin{align}\label{3-2}
\Phi(r,z)=[4\alpha^2r^2-(8\alpha+\pi^2)]\phi(r, z).
\end{align}
For $\alpha\geq1+\sqrt{1+\frac{\pi^2}{4}}$, it is easy to get that
\begin{equation}\nonumber
4\alpha^2r^2-(8\alpha+\pi^2) \geq 4\alpha^2-(8\alpha+\pi^2) \geq
0\quad {\rm for}\ r \geq 1.
\end{equation}
Consequently, we have
\begin{align}\label{3-3}
\Phi(r,z) \geq 0\quad {\rm for}\  1 \leq r, 0 \leq z \leq 1.
\end{align}
On the other hand, due to the boundary condition on the initial
data $u_0$ in \eqref{data-11}, one can use the first equation in
\eqref{3DInviscidModel} with $\nu=0$ to solve $u$:
\begin{equation}\nonumber
u^2(t, r, z) = u_0^2(r, z)\exp\{4\int_0^t\partial_z\psi(s, r, z)
ds\}\quad {\rm for}\ z \neq 0, 1.
\end{equation}
Hence, by continuity, as long as the solution is smooth, one also
has
\begin{equation}\label{3-4}
u(t, r, 0) = u(t, r, 1) = 0.
\end{equation}

Multiplying the second equation in \eqref{3DInviscidModel} with
$\nu=0$  by
$\phi_z$ and integrating over $\Omega(1, \infty)$, we have

\begin{align}\label{3-4-a}
- \int_0^1 \int_1^\infty (\partial_r^2 + \frac{3}{r}\partial_r +
\partial_z^2)\psi_t \phi_z r^3drdz = \int_0^1 \int_0^\infty
\partial_zu^2\phi_zr^3drdz.
\end{align}
On one hand, using \eqref{3-4} and performing integration by parts,
we get
\begin{align}\label{3-4-b}
\int_0^1\int_1^\infty \partial_zu^2\phi_zr^3drdz = -
\int_0^1\int_1^\infty u^2\partial_z^2\phi r^3drdz = \pi^2
\int_0^1 \int_1^\infty u^2\phi r^3drdz.
\end{align}
On the other hand, using the boundary condition \eqref{bd-1}
and performing integration by parts, we obtain by
noting that $\theta_r(1)+\beta\theta(1) = 0$ that
\begin{eqnarray}\label{3-4-c}
&&- \int_0^1 \int_1^\infty (\partial_r^2 + \frac{3}{r}\partial_r
  + \partial_z^2)\psi_t \phi_z r^3drdz\\\nonumber
&&= \frac{d}{dt}\int_0^1(\psi_r\phi_z - \psi\partial_{rz}^2\phi)|_{r=1}dz
  - \frac{d}{dt}\int_0^1 \int_1^\infty \psi(\partial_r^2 +
  \frac{3}{r}\partial_r) \phi_z r^3drdz\\\nonumber
&&\quad +\ \frac{d}{dt}\int_0^1 \int_1^\infty \psi_z\partial_z^2
  \phi r^3drdz\\\nonumber
&&= \pi\theta(1)\frac{d}{dt}\int_0^1(\cos\pi z)\big(\psi_r
  + \beta \psi \big)|_{r=1} dz
  + \frac{d}{dt}\int_0^1 \int_1^\infty \psi_z\Phi r^3drdz\\
&&= \frac{d}{dt}\int_0^1 \int_1^\infty  \psi_z\Phi r^3drdz.
\end{eqnarray}
Combining \eqref{3-4-c} with \eqref{3-4-a}-\eqref{3-4-b} gives
\begin{eqnarray}\label{3-5}
\frac{d}{dt}\int_0^1 \int_1^\infty \psi_z\Phi r^3drdz = \pi^2
\int_0^1 \int_1^\infty u^2\phi r^3drdz.
\end{eqnarray}

Multiplying the first equation in \eqref{3DInviscidModel} 
with $\nu=0$ by $\Phi$ and integrating on $\Omega(1, \infty)$ yield
\begin{align}\label{3-6}
\frac{d}{dt}\int_0^1 \int_1^\infty (\log u^2) \Phi r^3drdz =
4\int_0^1 \int_1^\infty  \psi_{z} \Phi r^3drdz.
\end{align}
Since $\int_0^1 \int_1^\infty \partial_z\psi_0\Phi r^3drdz > 0$,
we conclude from \eqref{3-5} that
\begin{eqnarray}\label{3-7}
\int_0^1 \int_1^\infty \psi_z\Phi r^3drdz > 0\quad {\rm for\
all}\ t \geq 0.
\end{eqnarray}
It follows form \eqref{3-6}-\eqref{3-7} and the condition
$\int_0^1 \int_1^\infty (\log u_0^2)\Phi r^3drdz > 0$ that
\begin{align}\label{3-8}
\int_0^1 \int_1^\infty (\log u^2)\Phi r^3drdz > 0\quad {\rm for\
all}\ t \geq 0.
\end{align}
Consequently, we obtain by using \eqref{3-2} and \eqref{3-8} that
\begin{eqnarray}\label{3-9}
&&\Big(\int_0^1 \int_1^\infty (\log u^2)\Phi
  r^3drdz\Big)^2 \leq \Big(\int_0^1 \int_1^\infty (\log^+ u^2)\Phi
  r^3drdz\Big)^2\\\nonumber
&&\leq 4\Big(\int_0^1 \int_1^\infty  |u|\Phi
  r^3drdz\Big)^2\\\nonumber
&&\leq 4 \int_0^1 \int_1^\infty |u|^2\phi
  r^3drdz \int_0^1 \int_1^\infty \big(\frac{\Phi}{\phi}\big)^2\phi
  r^3drdz\\\nonumber
&&\leq \frac{8c_0\pi^2}{3}\int_0^1 \int_1^\infty |u|^2\phi
  r^3drdz,
\end{eqnarray}
where $c_0$ is defined by
\begin{eqnarray}\label{c0}
c_0 = \frac{3}{2\pi^2}\int_0^1 \int_1^\infty \big(4\alpha^2r^2-(8\alpha+\pi^2)\big)^2\phi
  r^3drdz < \infty.
\end{eqnarray}
Using \eqref{3-5}, \eqref{3-6} and \eqref{3-9}, we get
\begin{eqnarray} \label{3-10}
\frac{d^2}{dt^2}\int_0^1 \int_1^\infty (\log u^2)\Phi r^3drdz &= &
  4\pi^2 \int_0^1 \int_1^\infty  u^2\phi r^3drdz\\\nonumber
&\geq & \frac{3}{2c_0}\Big(\int_0^1 \int_1^\infty (\log u^2) \Phi
  r^3drdz\Big)^2.
\end{eqnarray}
Let
\begin{equation}
\label{def-Y}
Y(t) \equiv \int_0^1 \int_1^\infty (\log u^2) \Phi r^3drdz\;.
\end{equation}
Then \eqref{3-10} implies
\begin{equation}
\label{3-11}
Y''(t) \geq \frac{3}{2c_0} Y(t).
\end{equation}
It follows from \eqref{3-6}-\eqref{3-7} that $Y'(t) > 0$.
Multiplying \eqref{3-11} by $Y'(t)$ and integrating in
time from 0 to $t$, we get
\begin{eqnarray} \nonumber
&&\Big(Y'(t)\Big)^2 \geq \Big(Y'(0)\Big)^2 - \frac{1}{c_0} Y(0)^3
+\ \frac{1}{c_0} Y(t)^3\\\nonumber
&&= \Big(4\int_0^1 \int_1^\infty \partial_z\psi_0 \Phi
  r^3drdz\Big)^2 - \frac{1}{c_0}\Big(\int_0^1 \int_1^\infty 
  (\log u_0^2)\Phi r^3drdz\Big)^3
+\ \frac{1}{c_0} Y(t)^3 \;.
\end{eqnarray}
The condition \eqref{2-6} implies
\begin{eqnarray} \label{dY(t)}
\Big(Y'(t)\Big)^2 \geq \frac{1}{c_0} Y(t)^3.
\end{eqnarray}
Since $ Y'(t) >0$, it is easy to solve \eqref{dY(t)} to
conclude that
\begin{eqnarray}\label{4-5}
\int_0^1 \int_1^\infty (\log u^2)\Phi r^3drdz \geq
\frac{4c_0\int_0^1 \int_1^\infty (\log u_0^2) \Phi
r^3drdz}{\Big(2\sqrt{c_0} - t\sqrt{\int_0^1 \int_1^\infty 
(\log u_0^2) \Phi r^3drdz}\Big)^2} \;\;.
\end{eqnarray}
Note that \eqref{3-2} gives
\begin{equation}
\Phi(r,z)=[4\alpha^2r^2-(8\alpha+\pi^2)]\phi(r, z)
= [4\alpha^2r^2-(8\alpha+\pi^2)] e^{-\alpha r^2} \sin(\pi z).
\end{equation}
Since $r\geq 1$, it is easy to show that $\Phi$ satisfies
\begin{equation}
0 \leq \Phi(r,z) \leq 4\alpha^2r^2 e^{-\alpha r^2} \leq
 4\alpha^2 e^{-\alpha}, \quad r \geq 1.
\end{equation}
Thus, we have
\begin{eqnarray}
\label{4-6}
\int_0^1 \int_1^\infty (\log u^2)\Phi r^3drdz &\leq &
\int_0^1 \int_1^\infty (\log^+ u^2)\Phi r^3drdz \\ \nonumber
&\leq & 4\alpha^2 e^{-\alpha} \int_0^1 \int_1^\infty u^2 r^3drdz .
\end{eqnarray}
Combining \eqref{4-5} with \eqref{4-6} gives
\begin{equation}
\label{4-7}
4\alpha^2 e^{-\alpha} \int_0^1 \int_1^\infty  u^2 r^3drdz \geq
\frac{4c_0\int_0^1 \int_1^\infty (\log u_0^2)\Phi
r^3drdz}{\Big(2\sqrt{c_0} - t\sqrt{\int_0^1 \int_1^\infty 
(\log u_0^2)\Phi r^3drdz}\Big)^2} \; .
\end{equation}
On one hand,  we have $\int_0^1 \int_1^\infty u^2 r^3drdz < \infty$
for all times by \eqref{3-1}. On the other hand, the right hand side
of \eqref{4-7} will blow up as
$t \rightarrow \frac{2\sqrt{c_0}}{\sqrt{\int_0^1\int_0^\infty(\log u_0^2)\Phi r^3drdz}}$. This is clearly a contradiction. This contradiction implies
that the assumption \eqref{3-1} can not be valid and the solution
must blow up in a finite time in the $H^3$ norm no later than
$T^* = \frac{2\sqrt{c_0}}{\sqrt{\int_0^1\int_0^\infty (\log u_0^2)\Phi r^3drdz}}$. This completes the proof of Theorem \ref{thm-bu-exterior}.

\section{Global Regularity with Large Data}

This section is devoted to proving Theorem \ref{thm1.2}.

\noindent
{\bf Proof of Theorem \ref{thm1.2}.}
First of all, the local well-posedness of the initial boundary value
problem
can be established by using an argument similar to that of \cite{HSW09}.
Now let us assume that $(u, \psi)$ is a local smooth solution
satisfying boundary condition \eqref{data-31} such that
$u \in C\big([0, T); H^s(\Omega(0, 1))\big)$,
$\psi \in C\big([0, T); H^{s+1}(\Omega(0, 1))\big)$ for some $T > 0$.
We are going to show that $T = \infty$.

Denote by
\begin{equation}\nonumber
\widetilde{v} = - \partial_z\psi,\quad \widetilde{u} = u^2.
\end{equation}
It is easy to see that
\begin{equation}\nonumber
\widetilde{v} \in C\big([0, T); H^s(\Omega(0, 1))\big),\quad
\widetilde{u} \in C\big([0, T); H^s(\Omega(0, 1))\big).
\end{equation}
Differentiating the second equation in \eqref{3DInviscidModel} with
$\nu=0$ with respect to $z$, we get
\begin{equation}\nonumber
(\partial_r^2 + \frac{3}{r}\partial_r +
\partial_z^2)\partial_t\widetilde{v} = \partial_z^2\widetilde{u}.
\end{equation}
Note that $\partial_t\widetilde{v}|_{\partial\Omega(0, 1)} = 0$.
We define $g \equiv \Delta_5^{-1} f$ as the solution of the
Laplacian equation with the homogeneous Dirichlet boundary condition:
\begin{equation}\nonumber
(\sum_{i = 1}^4\partial_{x_i}^2 +
\partial_z^2) g = f,\quad g|_{\partial\Omega(0,
1)} = 0,\quad \forall f \in L^2(\Omega(0, 1)).
\end{equation}
Then, we can reformulate the 3D inviscid model as follows:
\begin{align}\nonumber
\begin{cases}
\widetilde{u}_t =-4\widetilde{u}\widetilde{v},  \\
\widetilde{v}_t= \Delta_5^{-1}\partial_z^2\widetilde{u}.
\end{cases}
\end{align}
Note that the boundary condition in \eqref{data-31} implies that
\begin{equation}\nonumber
\widetilde{v}|_{\partial\Omega(0, 1)} = M .
\end{equation}
If we further denote
\begin{equation}\label{def-v}
v = \widetilde{v} - M,
\end{equation}
we obtain an equivalent system for $\widetilde{u}$ and $v$:
\begin{eqnarray}
\label{eqn-u}
&&\widetilde{u}_t = - 4M\widetilde{u} -4\widetilde{u}v ,  \\
\label{eqn-v}
&&v_t = \Delta_5^{-1}\partial_z^2\widetilde{u},\\
\label{BC-v}
&&v|_{\partial\Omega(0, 1)} = 0.
\end{eqnarray}

Recall the following well-known Sobolev inequality \cite{Foland95}.
Let $u,v \in H^s(\Omega)$ with $s>n/2$ ($n$ is the dimension of
$\Omega$). We have
\begin{eqnarray}
\label{Sobolev-1}
\|uv\|_{H^s(\Omega)}\le C_s \|u\|_{H^s(\Omega)}\|v\|_{H^s(\Omega)}.
\end{eqnarray}
We will also use the Poincar\'e inequality \cite{Evans98}:
\begin{eqnarray}
\label{Sobolev-2}
\|v\|_{H^s(\Omega)}\le C_s \|\nabla v\|_{H^{s-1}(\Omega)} \;,
\end{eqnarray}
if $v|_{\partial\Omega(0, 1)} = 0$. Here $C_s$ is an absolute positive
constant depending on $s$ and $\Omega$ only.

Taking the $H^s$ norm to the both sides of \eqref{eqn-u} and using
\eqref{Sobolev-1}-\eqref{Sobolev-2}, we obtain
\begin{eqnarray}
\label{eqn-4-1}
\frac{d}{dt} \|\widetilde{u}\|_{H^s} &\leq &  -4 M \|\widetilde{u}\|_{H^s}
+ 4C_s \|\widetilde{u}\|_{H^s} \|v \|_{H^s} \\ \nonumber
&\leq & -4 M \|\widetilde{u}\|_{H^s} + 4 C_s^2 \|\widetilde{u}\|_{H^s}
\|\nabla v \|_{H^{s-1}} \\ \nonumber
& \leq & (-4M + 4 C_s^2 \|\nabla v \|_{H^{s-1}}) \|\widetilde{u}\|_{H^s},
\end{eqnarray}
where we have used \eqref{BC-v}.

Next, we apply $\nabla$ to the both sides of \eqref{eqn-v} and take
the $H^{s-1}$-norm. We get
\begin{eqnarray}
\label{eqn-4-2}
\frac{d}{dt} \|\nabla v\|_{H^{s-1}} &\leq  &
\|\nabla \Delta_5^{-1}\partial_z^2 \widetilde{u} \|_{H^{s-1}} \\ \nonumber
&\leq &
\|\Delta_5^{-1}\partial_z^2 \widetilde{u} \|_{H^{s}} \\ \nonumber
&\leq & C_s\|\partial_z^2 \widetilde{u} \|_{H^{s-2}} \leq C_s \|\widetilde{u} \|_{H^{s}},
\end{eqnarray}
where we have used the standard elliptic regularity estimate for
$\Delta_5^{-1}$.

By the local well-posedness result and the assumption on the initial
condition \eqref{data-4}, we know that there exists a positive $T>0$ such
that we have
\begin{equation}
\label{eqn-4-3}
\|\nabla v (t)\|_{H^{s-1}} \leq \frac{M}{2 C_s^2} , \quad 0 \leq t \leq T.
\end{equation}
Let $T^*$ be the largest time such that \eqref{eqn-4-3} holds. We will
prove that $T^* = \infty$. Suppose $T^* < \infty$. Substituting
\eqref{eqn-4-3} into \eqref{eqn-4-1}, we obtain
\begin{equation}
\label{eqn-4-4}
\frac{d}{dt} \|\widetilde{u}\|_{H^s} \leq  -2 M \|\widetilde{u}\|_{H^s},
\quad 0 \leq t < T^*,
\end{equation}
which implies
\begin{equation}
\label{eqn-4-5}
\|\widetilde{u}\|_{H^s} \leq  \|\widetilde{u_0}\|_{H^s} e^{-2M t},
\quad 0 \leq t < T^*.
\end{equation}
Now substituting \eqref{eqn-4-5} into \eqref{eqn-4-2} yields
\begin{eqnarray}
\label{eqn-4-6}
\|\nabla v\|_{H^{s-1}} &\leq&  \|\nabla v_0\|_{H^{s-1}} +
C_s \|\widetilde{u_0}\|_{H^s} \int_0^t e^{-2Ms} ds \\ \nonumber
&\leq & \|\nabla v_0\|_{H^{s-1}} + \frac{C_s}{2M} \|\widetilde{u_0}\|_{H^s} \\ \nonumber
&\leq &\frac{M}{4 C_s^2} < \frac{M}{2 C_s^2}, \quad 0 \leq t < T^*,
\end{eqnarray}
where we have used the condition \eqref{data-4}. Since
$\|\nabla v\|_{H^{s-1}} \leq \frac{M}{4 C_s^2} < \frac{M}{2 C_s^2}$
for $t < T^*$ and
$\|\widetilde{u}\|_{H^s} \leq  \|\widetilde{u_0}\|_{H^s} e^{-2M T^*}
< \|\widetilde{u_0}\|_{H^s}$ for $t < T^*$, we can use
our {\it a priori} estimates \eqref{eqn-4-1}-\eqref{eqn-4-2} to
show that \eqref{eqn-4-3} remains valid for $ 0\leq t \leq T^* + \delta$
for some $\delta >0$. This contradicts with the assumption that $T^*$
is the largest time such that \eqref{eqn-4-3} holds. This contradiction
implies that $T^* = \infty$ and estimates \eqref{eqn-4-5}-\eqref{eqn-4-6}
remain valid for all times. This completes the proof of
Theorem \ref{thm1.2}.

\section{Local Well-Posedness in the Exterior Domain}

The local well-posedness theory of the initial-boundary value
problem of the 3D model \eqref{3DInviscidModel} with
\eqref{data-1} and \eqref{bd-1} is based on the following lemma
\ref{lemA-1}. Once the lemma is proved, the proof can be carried
out in exactly the same way as the local well-posedness analysis
presented in \cite{HSW09}. Hence, we will only present the
proof of the lemma \ref{lemA-1} in a slightly modified domain
with $1 < r < \infty$ and $0 < z < \pi$.
\begin{lem}\label{lemA-1}
For any given $\omega\in H^{s-2}(\Omega(1, \infty))$ with
$s\geq2$, there exists a unique solution $\psi\in H^s(\Omega(1,
\infty))$ to the boundary value problems
\begin{align}\label{A-2}
\begin{cases}
- \big(\partial_r^2 + \frac{3}{r}\partial_r
  + \partial_z^2\big)\psi = \omega,\quad 1 < r,\; 0 < z < \pi,\\
\psi_r +\beta\psi|_{r=1}=0,\quad \psi_z|_{z=0} = \psi_z|_{z=\pi} =
0,
\end{cases}
\end{align}
where $\beta \in S_{\rm exterior}$ is a constant. Furthermore, we
have the following estimate:
\begin{align}\label{A-est}
\|\psi\|_{H^s(\Omega(1, \infty))} \leq
C\|\omega\|_{H^{s-2}(\Omega(1, \infty))},
\end{align}
where $C$ is an absolute positive constant.
\end{lem}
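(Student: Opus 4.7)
The plan is to proceed by separation of variables in $z$, reducing the five-dimensional elliptic problem to a countable family of one-dimensional ODEs in $r$, each of which can be analyzed explicitly via modified Bessel functions of order~$1$.

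First, the homogeneous Neumann conditions at $z=0,\pi$ suggest the Fourier cosine expansion $\psi(r,z)=\sum_{k=0}^{\infty}\psi_k(r)\cos(kz)$ and $\omega(r,z)=\sum_{k=0}^{\infty}\omega_k(r)\cos(kz)$. Inserting into \eqref{A-2} separates the PDE into the family of radial ODEs
\[
-\psi_k''(r)-\frac{3}{r}\psi_k'(r)+k^2\psi_k(r)=\omega_k(r),\quad r>1,
\]
each subject to $\psi_k'(1)+\beta\psi_k(1)=0$ together with the decay $\psi_k\to 0$ as $r\to\infty$, which is forced by $\psi\in L^2$ against the weighted measure $r^3\,dr\,dz$.

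Second, the substitution $\psi_k(r)=r^{-1}u_k(r)$ converts the homogeneous radial operator $\partial_r^2+\tfrac{3}{r}\partial_r-k^2$ into the modified Bessel operator of order~$1$. For $k\ge 1$ the two linearly independent homogeneous solutions are $\psi_k^-(r)=r^{-1}K_1(kr)$, which decays as $r\to\infty$, and $\psi_k^+(r)=r^{-1}I_1(kr)$, which grows exponentially. Using these I construct the explicit Green's function on $[1,\infty)$ that enforces both the Robin condition at $r=1$ and decay at infinity, and represent $\psi_k$ as an integral of $\omega_k$ against this kernel. Solvability of this mode-by-mode problem requires that the decaying homogeneous mode $r^{-1}K_1(kr)$ itself not satisfy the Robin condition at $r=1$; using the integral representation $K_1(z)=\int_0^\infty e^{-z\cosh\theta}\cosh\theta\,d\theta$ and differentiating under the integral, this non-vanishing condition translates precisely into the exclusion of $\beta$ from the discrete set defining $S_{\rm exterior}$ in \eqref{set-1}. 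The $k=0$ mode reduces to $(r^3\psi_0')'=-r^3\omega_0$, which is integrated directly, with the two free constants fixed by the Robin condition at $r=1$ and decay at infinity.

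Third, to obtain \eqref{A-est}, I bound each $\psi_k$ in the weighted $H^s((1,\infty),\,r^3\,dr)$ norm by $\|\omega_k\|$ in the corresponding $H^{s-2}$ norm with a constant decaying like $(1+k^2)^{-1}$, exploiting the $k^2\psi_k$ term on the left-hand side of the ODE and the exponential decay of $K_1(kr)$ appearing in the Green's function kernel. Parseval's identity in $z$ then assembles these mode-wise bounds into the desired estimate \eqref{A-est}. I expect the main obstacle to be proving these mode-wise bounds \emph{uniformly} in $k$: the prefactor in the Green's function is essentially the reciprocal of the very quantity that vanishes on $S_{\rm exterior}^c$, so a quantitative lower bound on its distance from zero is required. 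This is handled by combining the large-$k$ Bessel asymptotics $K_1(k)\sim\sqrt{\pi/(2k)}\,e^{-k}$ and $K_1'(k)\sim-\sqrt{\pi/(2k)}\,e^{-k}$ with the observation that the exceptional $\beta$-values form a discrete set converging to a finite limit, so that any fixed $\beta\in S_{\rm exterior}$ is bounded away from them by a positive distance, yielding uniform-in-$k$ control of the Green's function constants.
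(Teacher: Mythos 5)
Your proposal is correct in substance and rests on the same two pillars as the paper's proof: the Fourier cosine transform in $z$ (legitimate because of the Neumann conditions at $z=0,\pi$) and the reduction of the radial operator $\partial_r^2+\frac{3}{r}\partial_r-k^2$ to the modified Bessel equation of order one via $\psi_k=r^{-1}u_k$, with the resonance condition --- that the decaying mode $r^{-1}K_1(kr)$ not itself satisfy the Robin condition at $r=1$ --- identified with the definition of $S_{\rm exterior}$ through the representation $K_1(z)=\int_0^\infty e^{-z\cosh\theta}\cosh\theta\,d\theta$. Where you diverge is the decomposition. The paper first writes $\psi=\psi^{(1)}+\psi^{(2)}$, where $\psi^{(1)}$ solves the inhomogeneous equation with a homogeneous \emph{Dirichlet} condition at $r=1$; standard exterior elliptic regularity then gives $\|\psi^{(1)}\|_{H^s}\le C\|\omega\|_{H^{s-2}}$ at no cost, and the Bessel analysis is needed only for the \emph{homogeneous} equation satisfied by $\psi^{(2)}$ with inhomogeneous Robin data $-\psi^{(1)}_r|_{r=1}$, so that each mode is the one-parameter family $C(k)r^{-1}K_1(kr)$ and no Green's function is required. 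You instead solve the full inhomogeneous ODE mode by mode with a two-point kernel built from $I_1(kr)$ and $K_1(kr)$ and must reassemble the elliptic estimate by hand via Parseval. This is workable, but it is precisely what forces you to confront the uniform-in-$k$ bounds you identify as the main obstacle; the paper's splitting outsources the regularity of the bulk term to standard theory and only needs pointwise control of the scalar $C(k)$.

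Two points need repair. First, the exceptional values $\beta_k$ do \emph{not} converge to a finite limit: by Laplace's method the ratio $\int_0^\infty e^{-k\cosh\theta}\cosh^2\theta\,d\theta\big/\int_0^\infty e^{-k\cosh\theta}\cosh\theta\,d\theta$ tends to $1$, so $\beta_k\sim k\to\infty$. Had they accumulated at a finite point $L$, a $\beta$ equal to $L$ could belong to $S_{\rm exterior}$ while lying at distance zero from the exceptional set, and your ``bounded away by a positive distance'' conclusion would fail; as it happens divergence is even more favorable, since only finitely many $\beta_k$ meet any neighborhood of a fixed admissible $\beta$ and the denominator in your kernel grows like $kK_1(k)$ relative to the numerator for large $k$. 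The conclusion survives, but the justification as written must be replaced. Second, the $k=0$ mode is not merely ``fixed by the Robin condition and decay'': the homogeneous radial solutions $1$ and $r^{-2}$ are \emph{both} excluded from $L^2(r^3\,dr\,dz)$ on the exterior domain, so there is no free constant available to enforce the Robin condition on a particular solution, and existence for the zero mode requires a separate argument (a subtlety the paper's own proof also passes over without comment).
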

\begin{proof}
Let us decompose
$\psi=\psi^{(1)}+\psi^{(2)},$
where $\psi^{(1)}$ is the solution of the elliptic equation
with the following mixed Dirichlet-Neumann boundary condition:
\begin{align}\label{A-3}
\begin{cases}
- \big(\partial_r^2 + \frac{3}{r}\partial_r
  + \partial_z^2\big)\psi^{(1)} = \omega,\quad {\rm in}\ \Omega(1, \infty),\\
\psi_z^{(1)}|_{z=0} = \psi_z^{(1)}|_{z=\pi}= 0, \quad \psi^{(1)}|_{r=1} = 0, \quad \psi^{(1)}
\rightarrow 0\ {\rm as}\ r \rightarrow \infty.
\end{cases}
\end{align}
Then $\psi^{(2)}$ satisfies
\begin{align}\label{A-4}
\begin{cases}
- \big(\partial_r^2 + \frac{3}{r}\partial_r
  + \partial_z^2\big)\psi^{(2)} = 0,\quad {\rm in}\ \Omega(1, \infty),\\
[\psi^{(2)}_r +\beta\psi^{(2)}]|_{r=1}= - \psi^{(1)}_r|_{r=1},\\
\psi^{(2)}_z|_{z=0}= \psi^{(2)}_z|_{z=\pi} = 0.
\end{cases}
\end{align}
The standard elliptic theory gives that $\psi^{(1)}\in
H^s(\Omega(1, \infty))$ and $\|\psi^{(1)}\|_{H^s(\Omega(1,
\infty))}\leq \|\omega\|_{H^{s-2}(\Omega(1, \infty))}$. It remains
to show that the Neumann-Robin problem \eqref{A-4} is
well-posed.

Let us perform the cosine transform to \eqref{A-4} with respect to
$z$ variable, which gives
\begin{align}\label{A-5}
\begin{cases}
- \big(\partial_r^2 + \frac{3}{r}\partial_r)\widehat{\psi^{(2)}} + k^2\widehat{\psi^{(2)}} = 0,\\
[\widehat{\psi_r^{(2)}}
+\beta\widehat{\psi^{(2)}}]|_{r=1}=-\beta\widehat{\psi^{(1)}_r}(1,k),
\end{cases}
\end{align}
where the cosine transform $\ \widehat{\cdot}\ $ is defined as:
\begin{align}
\widehat{\psi^{(2)}}(r,k)=\frac{2}{\pi}\int_0^{\pi}\psi^{(2)}(r,z)\cos(kz)dz.
\end{align}
Let $\widehat{\psi^{(2)}}(r,k)=\frac{1}{r}f(rk, k)$.
Then \eqref{A-5} can be written in terms of $f$ as
\begin{align}\nonumber
k^2\frac{d^2}{dr^2}f(rk)+\frac{k}{r}\frac{d}{dr}f(rk)-(k^2+\frac{1}{r^2})f(rk)=0,
\end{align}
which is equivalent to the following modified Bessel equation
\begin{align}\nonumber
r^2f''(r, k)+ rf'(r, k)-(1+r^2)f(r, k)=0.
\end{align}

The general solution of the modified Bessel equation with the
decaying property as $r \rightarrow \infty$ is given by modified
Bessel function:
\begin{align}\nonumber
f(r, k)= C(k)K(r),
\end{align}
where $K(r)$ is the modified Bessel function and can be
represented in an integration form as follows
\begin{align}\nonumber
K(r) = \int_0^{\infty}e^{-r\cosh(\theta)}\cosh(\theta)
d\theta,\quad K(r) \rightarrow 0\ {\rm as}\ r \rightarrow \infty.
\end{align}
In fact, we have $0 < K(r) \leq c_2r^{-1/2}e^{-r}$ as
$r \rightarrow \infty$ for some positive constant $c_2$. Thus, the
general solution of $\widehat{\psi^{(2)}}(k,r)$ is given by
\begin{align}
\widehat{\psi^{(2)}}(r,k)=\frac{C(k)}{r}K(rk).\label{A-10}
\end{align}
The boundary condition of $\psi^{(2)}$ in \eqref{A-5} can be used
to determine $C(k)$ as follows:
\begin{align}
C(k)=\frac{-\beta\widehat{\psi^{(1)}}(k,1)}{(\beta-1)K(k)+kK'(k)}.\label{A-11}
\end{align}
Note that $C(k)$ in \eqref{A-11} is well-defined for
$\beta\in S_{\rm exterior}$. The remaining part of the proof is
exactly the same as in \cite{HSW09}. We can show that
$\psi^{(2)}$ will have the same regularity property as $\psi^{(1)}$.
This proves our lemma. We will omit the details here.
\end{proof}

\section*{Acknowledgement}
This work was supported in part by NSF by Grant DMS-0908546. Zhen
Lei would like to thank the Applied and Computational Mathematics
of Caltech and Prof. Thomas Hou for hosting his visit and for
their hospitality during his visit. Zhen Lei was in part supported
by NSFC (grant No.11171072), the Foundation for Innovative
Research Groups of NSFC (grant No.11121101), FANEDD, Innovation
Program of Shanghai Municipal Education Commission (grant
No.12ZZ012) and SGST 09DZ2272900. The research of Dr. S. Wang was
supported by China 973 Program(Grant no. 2011CB808002), the Grants
NSFC 11071009 and PHR-IHLB 200906103.


\frenchspacing
\bibliographystyle{plain}







\end{document}